\newtheorem{theorem}{Theorem}
\newtheorem{corollary}[theorem]{Corollary}
\newtheorem{definition}[theorem]{Definition}
\newtheorem{example}[theorem]{Example}
\newtheorem{lemma}[theorem]{Lemma}
\newtheorem{problem}[theorem]{Problem}
\newtheorem{proposition}[theorem]{Proposition}
\newtheorem{remark}[theorem]{Remark}
\def\qed{\vbox{\hrule
 \hbox{\vrule\hbox to 5pt{\vbox to 8pt{\vfil}\hfil}\vrule}\hrule}}
\journal{xxxxxxx}
\begin{document}

\begin{frontmatter}


\title{Guo's index for some classes of matrices}

\author{Mar\'{i}a Robbiano}

\address{Departamento de Matem\'{a}ticas, Facultad de Ciencias. Universidad Cat\'{o}lica del Norte. Av. Angamos 0610 Antofagasta, Chile.}
\ead{mrobbiano@ucn.cl}

\begin{abstract}

\noindent A permutative matrix is a square matrix such that every row is a permutation of the first row. A circulant matrix is a matrix where each row is a cyclic shift of the row above to the right.
The Guo's index $\lambda_0$ of a realizable list is the minimum spectral radius such that the list (up to the initial spectral radius) together with $\lambda_0$
is realizable.
The Guo's index of some permutative matrices is obtained. Our results are constructive. Some examples designed using MATLAB are given at the end of the paper.
\end{abstract}

\begin{keyword}
Inverse eigenvalue problem; Structured inverse eigenvalue problem; Circulant matrix; Permutative matrix; Guo index
\MSC 15A18, 15A29, 15B99.
\end{keyword}

\end{frontmatter}
\section{Preliminaries}
\noindent In this section we present a brief resume.  Recall that a square matrix $A=(a_{ij})$ is nonnegative ($A \geq 0$) if and only if $a_{ij} \geq 0$ \  $ \left( 1\leq i,j\leq n\right).$ 
A list is an $n$-tuple, $\Lambda =\left(\lambda_1,\ldots,\lambda_n\right)$ of complex numbers and it is \textit{realized} by a $n$-by $n$ nonnegative matrix $A$ if the set formed its components and the spectrum of $A$ (considering multiplicities) coincide.
The NIEP is the problem of determining necessary and sufficient conditions for a list of $n$ complex
numbers to be realized by an $n$-by-$n$ nonnegative matrix $A$. If a list $\Lambda$ is realized by a nonnegative matrix $A$, then $\Lambda$ is \textit{realizable} and the matrix $A$ \textit{realizes} $\Lambda$ (or, that is a  \textit{realizing matrix} for the list). Some results can be seen in e.g. \cite{Laffey1,Laffey2,Laffey-Smigoc}.
 A variant of the original problem is the question for which lists of $n$ real numbers can occur as eigenvalues of an $n$-by-$n$  nonnegative matrix and it is called real nonnegative inverse eigenvalue problem (RNIEP). Some results can be seen in e.g. \cite{Laffey}. The structured NIEP is an analogous problem to NIEP where the realizing matrix must be structured, for instance, the matrix can be symmetric, Toeplitz, Hankel, circulant, normal, permutative, etc., see in \cite{Fiedler, Laffey, LMc, MAR,PP} and the reference therein.
 In this paper we deal with structured matrices. In particular, permutative and circulant matrices.

\noindent Throughout the text, $\sigma\left(A\right)$  denotes the set of eigenvalues of a square matrix $A$. As usual, the identity matrix of order $n$ is denoted by $I_n$ and if the order of the identity matrix can be easily deduced then it is just denoted by $I$.

\noindent Since a nonnegative matrix is real, its characteristic polynomial must have real coefficients and then
$ \{\lambda_{0}, \ldots, \lambda_{n-1}\} =\Lambda= \overline{\Lambda} =  \{\overline{\lambda_{0}}, \ldots, \overline{\lambda_{n-1}}\},$ where $\overline{\lambda}$ stands for the complex conjugate of $\lambda \in \mathbb{C}.$

\noindent Therefore consider the following definition:

\begin{definition}
The complex $n$-tuple $(\lambda_{0},\ldots,\lambda_{n-1})$ is  \textit{closed under complex conjugation} if the condition $$\{\lambda_{0},\ldots,\lambda_{n-1}\}=\{\overline{\lambda_{0}},\ldots,\overline{\lambda_{n-1}}\},$$ holds.
\end{definition}

\noindent The Perron-Frobenius theory of nonnegative matrices \cite{Berman} plays in this problem an important role.
The theory provides several important necessary conditions for the NIEP. See below some of these conditions resumed. Here, for $1\leq k \leq n,$ \ $s_{k} (\Lambda)=\sum\limits_{i=0}^{n-1}\lambda
_{i}^{k}$ is named the $k$-th moment.

\noindent Some necessary conditions for the list  $\Lambda = (\lambda_{0}, \ldots, \lambda_{n-1})$ of complex numbers to be the spectrum of a nonnegative matrix are:
\begin{enumerate}
\item The spectral radius,  $\max \left\{ \left\vert \lambda \right\vert
:\lambda \in \Lambda \right\} $, called the Perron eigenvalue,  belongs to $\Lambda.$

\item The list $\Lambda $ is closed under complex conjugation.

\item $s_{k}\left(\Lambda \right)\geq 0,$ \ $ k \geq 1 $.

\item $s_{k}^{m}\left( \Lambda \right) \leq n^{m-1}s_{km}\left( \Lambda
\right),$ \ $ k \geq 1 $.
\end{enumerate}
 The last condition was
proved by Johnson \cite{Johnson} and independently by Loewy and London \cite{LwyLdn}.

\vspace{0.5 cm}
The following fundamental theorem  was proven in \cite{GUO} and in its statement it is introduced formally the notion of Guo's index.
\begin{theorem}\cite [Theorem 2.1]{GUO}
\label{Guo} Let $(\lambda_{1},\ldots,\lambda_{n-1})$ be a closed under complex conjugation
$(n-1)$-tuple then, there exists a real number $\lambda_0$ (called Guo\ {'}s index) where
$$\max_{{1}\leq j\leq n-1}{|\lambda_j|}\leq\lambda_0.
$$
such that the list $(\lambda,\lambda_{1},\ldots,\lambda_{n-1})$ is realizable by an $n$-by-$n$ nonnegative matrix $A$ if and only if $\lambda\geq \lambda_0$. Furthermore, $\lambda_0\leq 2n\max_{1\leq j\leq n-1}{|\lambda_j|}$.

\end{theorem}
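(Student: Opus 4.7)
Let $c=\max_{1\le j\le n-1}|\lambda_j|$ and set
\[S=\{\lambda\in\mathbb{R}:(\lambda,\lambda_1,\ldots,\lambda_{n-1})\text{ is realizable}\}.\]
My plan is to establish three properties of $S$: it is upward--closed in $\mathbb{R}$, it is topologically closed, and it contains the value $2nc$. Combined with the Perron--Frobenius inequality $\lambda\ge c$ for every $\lambda\in S$ (item 1 of the necessary conditions recalled earlier), these properties imply $S=[\lambda_0,\infty)$ for some $\lambda_0\in[c,2nc]$, which is exactly the theorem.

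Upward closure is the cleanest step and is a direct consequence of Brauer's rank--one eigenvalue update theorem. Suppose $A\ge 0$ realizes $(\lambda,\lambda_1,\ldots,\lambda_{n-1})$ with Perron right eigenvector $v\ge 0$ (strictly positive when $A$ is irreducible, and reached as a limit of irreducible perturbations otherwise). Fix $t\ge 0$ and choose a nonnegative row vector $x^T$ supported on the nonzero entries of $v$ with $x^Tv=t$. Then $A+vx^T\ge A\ge 0$, and Brauer's theorem yields $\sigma(A+vx^T)=\{\lambda+t,\lambda_1,\ldots,\lambda_{n-1}\}$, since the Perron eigenvalue is shifted by $x^Tv$ while the remaining spectrum is unchanged. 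Hence $\lambda+t\in S$.

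For the upper bound $2nc\in S$, I would construct a realizing matrix of the form $A=\alpha J+E$, where $J$ is the $n\times n$ all--ones matrix and $E$ is a real $n\times n$ matrix with $Ee=0$ (where $e=(1,\ldots,1)^T$) and spectrum $\{0,\lambda_1,\ldots,\lambda_{n-1}\}$. Such an $E$ exists because the $(n-1)$--tuple is closed under conjugation: one can realize it in real block--Schur form on the hyperplane $e^\perp$, using $1\times 1$ diagonal blocks for real eigenvalues and $2\times 2$ rotation--type blocks for conjugate pairs, all of whose entries are bounded by $c$. Transporting back to the standard basis and choosing $\alpha$ of order $c$ large enough to absorb the entries of $E$ makes $A$ entrywise nonnegative, with $e$ as Perron eigenvector of eigenvalue $n\alpha$ and the remaining spectrum inherited from $E$. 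A careful bookkeeping of constants in this construction yields the stated value $2nc$.

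The main technical obstacle is the closedness of $S$. Suppose $\lambda_k\in S$ with $\lambda_k\to\lambda_*$, and let $A_k\ge 0$ be realizations of the corresponding lists. The entries of $A_k$ are a priori unbounded, so direct compactness fails; I would circumvent this by conjugating each $A_k$ with $\operatorname{diag}(v_k)$, where $v_k>0$ is a Perron eigenvector of a small irreducible perturbation of $A_k$, producing matrices whose row sums all equal the Perron eigenvalue and whose entries therefore lie in a fixed compact set. A convergent subsequence then has a nonnegative limit $A_*$, whose spectrum equals $(\lambda_*,\lambda_1,\ldots,\lambda_{n-1})$ by continuity of the eigenvalue map on matrices. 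Combining the three steps yields $S=[\lambda_0,\infty)$ with $c\le\lambda_0\le 2nc$, which is the statement of the theorem.
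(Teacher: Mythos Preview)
The paper does not prove this theorem; it is quoted as Theorem~2.1 from Guo's paper \cite{GUO} and serves only as background motivation for the notion of Guo's index. There is therefore no proof in the present paper against which to compare your attempt.

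Viewed on its own merits, your three--step outline (upward closure of $S$ via a Brauer rank--one update, an explicit $\alpha J+E$ construction to place $2nc$ in $S$, and a compactness argument for closedness) is essentially the architecture of Guo's original argument. Two points deserve tightening. First, in the closedness step, the ``small irreducible perturbation'' must have size tending to zero along the sequence, and you should make explicit that the diagonal conjugation $D_k^{-1}A_kD_k$ preserves the spectrum while forcing all entries into $[0,\lambda_k]$; a cleaner route is to invoke directly the fact (due to Johnson) that any realizable list is realized by a nonnegative matrix with constant row sums equal to the Perron root, which gives the uniform entrywise bound without perturbation. Second, the sentence ``careful bookkeeping \ldots\ yields the stated value $2nc$'' hides the only place where the factor $n$ actually appears: it comes from the change of basis taking the block--diagonal realization on $e^{\perp}$ back to standard coordinates, and a reviewer would expect that computation to be shown.
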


\noindent We define permutative matrix, below.

\begin{definition}
\cite{PP}
A square matrix of order $n$ with $n\geq 2$ is called a \textit{permutative matrix} or permutative when all its rows (up to the first one) are permutations of precisely its first row.
\end{definition}
The spectra of a class of permutative matrices was studied in \cite{MAR}.
In particular, spectral results for matrices partitioned into $2$-by-$2$ symmetric blocks were presented and, using these results sufficient conditions on a given list to be the list of eigenvalues of a nonnegative permutative matrix were obtained and the corresponding permutative matrices were constructed. 

\noindent This paper is structured in $4$ sections. In the second section we obtain necessary and suficient conditions for a list $\Lambda$ to be the spectrum of a class of permutative matrices. In the third section results about circulant and block matrices, given in \cite{AMRH} are revisited.
In the fourth section necessary and sufficient conditions for some lists to be realized by a class of structured matrices are exhibited.

\section{Classes of Permutative Matrices}
\noindent In this section some auxiliary results from \cite{PP} are recalled and some new definitions are introduced.
In \cite{PP} the following results were proven.

\begin{lemma}
\label{pappa copy(1)}\ \cite[Lemma 3.1]{PP} For $\mathbf{x}=\left(
x_{1},x_{2},\ldots,x_{n}\right)  ^{T}\in\mathbb{C}^{n}$, let
\begin{equation}
X=
\begin{pmatrix}
x_{1} & x_{2} & \ldots & x_{i} & \ldots & x_{n-1} & x_{n}\\
x_{2} & x_{1} & \ldots & x_{i} & \ldots & x_{n-1} & x_{n}\\
\vdots & \vdots & \ddots & \vdots & \ddots & \vdots & \vdots\\
x_{i} & x_{2} & \ddots & x_{1} & \ddots & \vdots & \vdots\\
\vdots & \vdots & \vdots & \vdots & \ddots & \vdots & \vdots\\
x_{n-1} & x_{2} & \ldots & \vdots & \vdots & x_{1} & x_{n}\\
x_{n} & x_{2} & \ldots & x_{i} & \ldots & x_{n-1} & x_{1}%
\end{pmatrix}
.\label{mX}%
\end{equation}
Then, the set of eigenvalues of $X$ is given by
\begin{equation}
\sigma(X)=\left\{
{\displaystyle\sum\limits_{i=1}^{n}}
x_{i},x_{1}-x_{2},x_{1}-x_{3},\ldots,x_{1}-x_{n}\right\}.
\end{equation}
\end{lemma}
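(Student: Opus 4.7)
The plan is to build an explicit similarity that reduces $X$ to upper-triangular form, with the claimed eigenvalues along the diagonal. The motivation is that $\mathbf{1} = (1,\dots,1)^T$ is manifestly a right eigenvector (every row of $X$ is a permutation of $(x_1,\dots,x_n)$ and hence sums to $s := \sum_{i=1}^n x_i$), so it should be profitable to promote $\mathbf{1}$ into a basis vector.

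Concretely, I would first compute the action of $X$ on the standard basis $\{e_1,\dots,e_n\}$. Inspecting $(\ref{mX})$, the $j$th column of $X$ (for $j\geq 2$) equals $x_j$ in every entry except on the diagonal, where it is $x_1$, so
$$X e_j \;=\; x_j\,\mathbf{1} \;+\; (x_1-x_j)\,e_j, \qquad j=2,\dots,n.$$
Together with $X\mathbf{1} = s\,\mathbf{1}$, this describes the action of $X$ on the modified basis $\{\mathbf{1},e_2,\dots,e_n\}$.

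Next I would take $P$ to be the change-of-basis matrix whose first column is $\mathbf{1}$ and whose remaining columns are $e_2,\dots,e_n$. Writing $P = I + (e_2+\cdots+e_n)\,e_1^T$, one checks $P^{-1} = I - (e_2+\cdots+e_n)\,e_1^T$, so $P^{-1}\mathbf{1}=e_1$ and $P^{-1}e_j=e_j$ for $j\geq 2$. Applying $P^{-1}$ to each column of $XP$ then yields
$$P^{-1}XP \;=\; \begin{pmatrix} s & x_2 & x_3 & \cdots & x_n \\ 0 & x_1-x_2 & 0 & \cdots & 0 \\ 0 & 0 & x_1-x_3 & \cdots & 0 \\ \vdots & & & \ddots & \vdots \\ 0 & 0 & 0 & \cdots & x_1-x_n \end{pmatrix},$$
an upper-triangular matrix whose diagonal is precisely $s,\, x_1-x_2,\dots,x_1-x_n$. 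Since similar matrices share their spectra and the eigenvalues of a triangular matrix are its diagonal entries, the stated conclusion follows.

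The only real choice in the argument is picking the right basis; once $\mathbf{1}$ is identified as the vector to substitute for $e_1$, the remaining verification of $P^{-1}$ and the computation of $P^{-1}XP$ is short and routine, so I do not anticipate any genuine obstacle.
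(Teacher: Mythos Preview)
Your argument is correct. The key observation $Xe_j = x_j\mathbf{1} + (x_1-x_j)e_j$ for $j\ge 2$, together with $X\mathbf{1}=s\mathbf{1}$, is exactly right, and your rank-one change of basis $P = I + (e_2+\cdots+e_n)e_1^T$ (with inverse $I - (e_2+\cdots+e_n)e_1^T$ since $e_1^T(e_2+\cdots+e_n)=0$) does triangularize $X$ as you wrote.

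As for comparison: the paper does not supply its own proof of this lemma --- it is quoted verbatim from \cite{PP} --- so there is no in-paper argument to set yours against. The paper does, however, record the linear map $M$ (Lemma~\ref{mM}) satisfying $M\mathbf{x}^T = (\sum_i x_i,\,x_1-x_2,\dots,x_1-x_n)^T$, which is the first-row-to-spectrum transformation; your similarity matrix $P$ is a different (and in some sense dual) object, acting on $X$ itself rather than on its defining vector. Either viewpoint immediately yields the eigenvalue list, and your route has the mild bonus of exhibiting a full triangularization rather than only the spectrum.
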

\begin{definition}
{\rm The list $\Lambda=\left(\lambda_1,\ldots,\lambda_n\right)$ is a \textit{Sule\u{\i}manova spectrum} if the $
\Lambda$ is real list, $\lambda _{1}>0\geq \lambda _{2}\geq
\cdots \geq \lambda _{n}$ and $\sum_{i=1}^{n}\lambda_i \geq 0$.}
\end{definition}
\begin{theorem} \label{pappa} \cite{PP} Let $\Lambda=\left( \lambda_{1},\ldots,\lambda
_{n}\right)  $ be a Sule\u{\i}manova spectrum and consider the $n$-tuple
$\mathbf{x=}\left(  x_{1},x_{2},\ldots,x_{n}\right)  $, where
\[%
\begin{tabular}
[c]{ccc}%
$x_{1}=\frac{\lambda_{1}+\cdots+\lambda_{n}}{n}$ & and & $x_{i}=x_{1}%
-\lambda_{i}, \, 2\leq i\leq n,$
\end{tabular}
\
\]
then the matrix in (\ref{mX}) realizes $\Lambda$.
\end{theorem}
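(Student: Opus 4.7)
The plan is to verify two things: that the spectrum of the matrix $X$ constructed from the given $\mathbf{x}$ agrees with $\Lambda$, and that $X$ is entrywise nonnegative. Since the entries of $X$ in (\ref{mX}) come from the tuple $(x_1,\ldots,x_n)$, nonnegativity of $X$ reduces to nonnegativity of each $x_i$.

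For the spectral part, I would simply invoke Lemma \ref{pappa copy(1)}, which hands us $\sigma(X)=\{\sum_i x_i,\,x_1-x_2,\ldots,x_1-x_n\}$, and then compute each piece in terms of the $\lambda_i$. Using $x_i=x_1-\lambda_i$ for $i\geq 2$, each difference $x_1-x_i$ equals $\lambda_i$, so $\lambda_2,\ldots,\lambda_n$ appear directly. For the sum, substitute to get
\[
\sum_{i=1}^{n}x_i \;=\; x_1+\sum_{i=2}^{n}(x_1-\lambda_i)\;=\;n x_1-\sum_{i=2}^{n}\lambda_i,
\]
and since $n x_1=\sum_{i=1}^{n}\lambda_i$ by the definition of $x_1$, this collapses to $\lambda_1$. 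Hence $\sigma(X)=\Lambda$.

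For the nonnegativity of $X$, it suffices to check that $x_i\geq 0$ for every $i$. The hypothesis $\sum_{i=1}^{n}\lambda_i\geq 0$ gives $x_1\geq 0$ directly. For $i\geq 2$, the Sule\u{\i}manova condition $\lambda_i\leq 0$ combined with $x_1\geq 0$ yields $x_i=x_1-\lambda_i\geq -\lambda_i\geq 0$. Thus $X\geq 0$, completing the realization.

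The argument is really just an unpacking of the definition; the genuine content sits in Lemma \ref{pappa copy(1)}. So there is no real obstacle, and the only thing worth stating carefully is that the Sule\u{\i}manova hypothesis is used in two distinct places: $\sum\lambda_i\geq 0$ secures $x_1\geq 0$, and $\lambda_i\leq 0$ for $i\geq 2$ secures $x_i\geq 0$ for the remaining indices.
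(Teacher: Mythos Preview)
Your argument is correct. You invoke Lemma~\ref{pappa copy(1)} to identify $\sigma(X)$, then verify by direct substitution that the eigenvalues match $\Lambda$ and that the Sule\u{\i}manova hypotheses force each $x_i\geq 0$; both steps are carried out cleanly.

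Note that the paper does not supply its own proof of this statement: Theorem~\ref{pappa} is quoted from \cite{PP} and stated without argument, so there is no in-paper proof to compare against. Your write-up is exactly the standard verification one would expect, and it relies only on Lemma~\ref{pappa copy(1)} (also imported from \cite{PP}) together with the definition of a Sule\u{\i}manova spectrum.
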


\begin{lemma}
\cite{PP}
Let $\mathbf{e}$ and $J$ be the all ones column vector and the all ones square matrix, respectively.
\label{mM}
Let
$$M=\begin{pmatrix}
1&\mathbf{e}^{T}\\
\mathbf{e}&-I_{n-1}
\end{pmatrix}$$ 
then 
$$M^{-1}=\frac{1}{n}\begin{pmatrix}
1&\mathbf{e}^{T}\\
\mathbf{e} &J-nI_{n-1}
\end{pmatrix}$$
\end{lemma}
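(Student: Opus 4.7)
The plan is to prove this lemma by direct verification: define $N$ to be the matrix proposed as $M^{-1}$ and compute $MN$ block-by-block, showing it equals $I_n = \begin{pmatrix} 1 & \mathbf{0}^T \\ \mathbf{0} & I_{n-1}\end{pmatrix}$. Since $M$ is square, this is sufficient to conclude that $N = M^{-1}$. The ingredients needed are just a few elementary identities for the all-ones vector $\mathbf{e}\in\mathbb{R}^{n-1}$ and the $(n-1)\times(n-1)$ all-ones matrix $J$: namely $\mathbf{e}^T\mathbf{e}=n-1$, $\mathbf{e}\mathbf{e}^T=J$, $J\mathbf{e}=(n-1)\mathbf{e}$, and $\mathbf{e}^T J = (n-1)\mathbf{e}^T$.

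Concretely, first I would compute the $(1,1)$ block of $MN$, which is $\tfrac{1}{n}\bigl(1\cdot 1+\mathbf{e}^T\mathbf{e}\bigr)=\tfrac{1}{n}(1+(n-1))=1$. Next, the $(1,2)$ block is $\tfrac{1}{n}\bigl(\mathbf{e}^T+\mathbf{e}^T(J-nI_{n-1})\bigr)=\tfrac{1}{n}\bigl(\mathbf{e}^T+(n-1)\mathbf{e}^T-n\mathbf{e}^T\bigr)=\mathbf{0}^T$. Then the $(2,1)$ block is $\tfrac{1}{n}\bigl(\mathbf{e}-I_{n-1}\mathbf{e}\bigr)=\mathbf{0}$. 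Finally, the $(2,2)$ block is $\tfrac{1}{n}\bigl(\mathbf{e}\mathbf{e}^T-I_{n-1}(J-nI_{n-1})\bigr)=\tfrac{1}{n}\bigl(J-J+nI_{n-1}\bigr)=I_{n-1}$. Assembling, $MN=I_n$, so $N=M^{-1}$.

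There is essentially no obstacle: the main (minor) point to keep straight is the dimensions of the various $\mathbf{e}$'s and $J$'s, since $\mathbf{e}$ in this lemma lives in $\mathbb{R}^{n-1}$ and $J$ is $(n-1)\times(n-1)$, so $\mathbf{e}^T\mathbf{e}=n-1$ rather than $n$. Once this is observed, the four block computations above close without any further subtlety.
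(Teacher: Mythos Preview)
Your proof is correct. The paper itself does not supply a proof of this lemma; it is simply quoted from \cite{PP}, so there is no ``paper's proof'' to compare against. Your direct block-by-block verification that $MN=I_n$ is the natural argument, and your care in noting that $\mathbf{e}\in\mathbb{R}^{n-1}$ (so that $\mathbf{e}^T\mathbf{e}=n-1$) is exactly the one place where a careless reader might slip.
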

The following fact is an immediate consequence of Lemmas \ref{pappa copy(1)} and \ref{mM}.
\begin{remark}
\label{arbitrary_list}
Let $\Lambda=\left(\lambda_{1},\ldots, \lambda_{n}\right)$ and consider $\mathbf{x^{T}}=M^{-1}\Lambda^{T}$ then $\Lambda$  is the list of eigenvalues of the matrix $X$ in (\ref{mX}), where $\mathbf{x}=\left(x_{1},\ldots.x_{n}\right).$
\end{remark}

\noindent The following notions will be used in the sequel.

\begin{definition}
{\rm
\label{ept}
Let $\mathbf{\tau }=\left( \tau _{1},\ldots ,\tau _{n}\right) $ be an $n$-tuple whose elements are permutations in the symmetric group\ $S_{n}$, with $\tau _{1}=id$.\ Let $\mathbf{a=}\left( a_{1},\ldots ,a_{n}\right) \in
\mathbb{C}^{n}$. Define the row-vector,
\begin{equation*}
\tau _{j}\left( \mathbf{a}\right) =\left( a_{\tau _{j}\left( 1\right)
},\ldots ,a_{\tau _{j}\left( n\right) }\right)
\end{equation*}%
and consider the matrix
\begin{equation}
\tau \left( \mathbf{a}\right) =%
\begin{pmatrix}
\tau _{1}\left( \mathbf{a}\right)   &
\tau _{2}\left( \mathbf{a}\right)  &
\ldots &
\tau _{n-1}\left( \mathbf{a}\right)  &
\tau _{n}\left( \mathbf{a}\right)
\end{pmatrix}^{T}
.  \label{permut}
\end{equation}

\noindent A square matrix $A$, is called $\mathbf{\tau}$\emph{-permutative} if
$A=\tau \left( \mathbf{a}\right) $ for some $n$-tuple $\mathbf{a}$.}
\end{definition}

\begin{definition}\cite{MAR}
{\rm
If $A$ and $B$ are $\mathbf{\tau }$-permutative by a common vector
$\mathbf{\tau }=\left( \tau _{1},\ldots ,\tau _{n}\right)$
then they are called \textit{permutatively equivalent}.}
\end{definition}

\noindent We now need to define the following concept.

\begin{definition}
If $A$ and $B$ are permutatively equivalent matrices then $B$ is \textit{$A$-like permutative} or $A$ is \textit{$B$-like permutative}.
\end{definition}

\begin{remark}

\label{important2} A permutative matrix $A$ defines the class of permutatively equivalent matrices: The class of the $A$-like permutative matrices. Let $\sigma_{1}$ be an arbitrary Sule\u{\i}manova
spectra, then the corresponding realizing matrix $X_{\sigma_{1}}$ 
given by Theorem \ref{pappa} is a $X$-like permutative matrix, where $X$ is as the matrix in (\ref{mX}). Furthermore,  by Lemma \ref{pappa copy(1)} and Remark \ref{arbitrary_list} it is easy to check that given an arbitrary $n$-tuple (not necessarily into the NIEP) there exist a solution which is $X$ -like permutative, where $X$ is as the matrix in (\ref{mX}). By simplicity, during the paper we use $X$-like permutative matrix to refer a $X$-like permutative matrix, where $X$ is as the matrix in (\ref{mX}).
\end{remark}
\noindent The following definition concerns to the spectra.

\begin{definition}
\label{XlikeperSpc}
Given the list $\Lambda =\left( \lambda_0 ,\lambda _{1},\lambda
_{2},\ldots ,\lambda _{n-1}\right) $ we say that $%
\Lambda $ is \textit{ $X$-like permutative list} if there exists a nonnegative $X$-like permutative matrix whose spectrum is $\Lambda$. 
\end{definition}
 
 By Theorem \ref{pappa} the Sule\u{\i}manova lists are $X$-like permutative. Also the list $\Lambda =\left( 24 ,5,6,7,8\right) $ is a $X$-like permutative list as the $X$-like permutative matrix whose first row is $(10,2,3,4,5)$ realizes $\Lambda$.
 
 \begin{remark}
 \label{real-spectrum}
 By the equations in Theorem \ref{pappa}, a $X$-like permutative spectrum is always a real spectrum.
 \end{remark}
\noindent The following result is a necessary and sufficient condition for the list $\Lambda=\left(\lambda_1,\lambda_2,\ldots,\lambda_n\right)$ to be a $X$-like permutative list. 
\begin{theorem}
\label{necessary_suff}
Let $\Lambda=\left(\lambda_1,\lambda_2,\ldots,\lambda_n\right),$ be a real list, where $\lambda_1=\max _{ 2\leq \ell \leq n }{ \left| { \lambda  }_{ \ell  } \right|  } $, and $\sum _{ j=1  }^{ n }{ { \lambda  }_{ j }} \geq0.$ Then the set formed by the components of $\Lambda$ are the eigenvalues of a nonnegative irreducible $X$-like permutaitive matrix if and only if
\begin{eqnarray}
\label{ineq_perm}
{ \lambda  }_{ 1 }+(n-1){ \lambda  }_{ \nu (i) }\geq \sum _{ j=2,\  j\neq i  }^{ n }{ { \lambda  }_{ \nu (j) } },
\end{eqnarray}
for all $2\leq i\leq n$ and for some $\nu \in S_{n-1},$ the symmetric group of the permutations of the set $\{2,\ldots,n\}.$
In consequence, the expression $$\min _{ \nu \in { S }_{ n-1 } }{ \max _{ 2\leq i\leq n }{ \sum _{ j=2,\quad j\neq i }^{ n }{ { \lambda  }_{ \nu (j) } }  }  } -(n-1){ \lambda  }_{ \nu (i) }$$ is a Guo index for $X$-like permutative lists.
\end{theorem}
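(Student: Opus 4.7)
The plan is to invoke Remark \ref{arbitrary_list} directly: any list $\Lambda$ is already the spectrum of the $X$-like permutative matrix whose first row satisfies $\mathbf{x}^T=M^{-1}\Lambda^T$, and the only remaining freedom is a permutation $\nu\in S_{n-1}$ specifying how $\lambda_{2},\ldots,\lambda_{n}$ are distributed among the slots $x_{1}-x_{2},\ldots,x_{1}-x_{n}$ supplied by Lemma \ref{pappa copy(1)} (with $\lambda_{1}$ paired to $\sum_i x_i$, the Perron root, since $X\mathbf{e}=(\sum_i x_i)\mathbf{e}$). Writing out $M^{-1}\Lambda^T$ with this labelling via Lemma \ref{mM} yields the explicit formulas
\begin{equation*}
x_{1}=\frac{1}{n}\sum_{j=1}^{n}\lambda_{j},\qquad x_{i}=x_{1}-\lambda_{\nu(i)}\quad(2\le i\le n).
\end{equation*}

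The equivalence then reduces to characterizing nonnegativity (and irreducibility) of $X$ in terms of these entries. The condition $x_{1}\ge 0$ is exactly the hypothesis $\sum_j\lambda_j\ge 0$, and the conditions $x_{i}\ge 0$ for $i\ge 2$ become $\lambda_{\nu(i)}\le x_{1}$; multiplying through by $n$ and separating $\lambda_{\nu(i)}$ from the decomposition $\sum_{j=1}^n\lambda_j=\lambda_1+\lambda_{\nu(i)}+\sum_{j\ne i,\,j\ge 2}\lambda_{\nu(j)}$ rearranges into the displayed inequality (\ref{ineq_perm}). Sufficiency is then the explicit construction; necessity follows because any $X$-like realization of $\Lambda$ must arise via such a permutation $\nu$ (its row sum forces the Perron to equal $\sum_i x_i$, fixing the identification of the remaining eigenvalues up to $\nu$). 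For irreducibility I would examine the zero pattern in (\ref{mX}): off the diagonal, column $i$ (with $i\ge 2$) carries the entry $x_{i}$ in every row, so a vanishing $x_{i}$ isolates column $i$ in the digraph of $X$ and forces reducibility, while strict positivity of every $x_{i}$ makes that digraph strongly connected. Hence irreducibility corresponds to the strict version of (\ref{ineq_perm}).

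Finally, the Guo index claim reads off (\ref{ineq_perm}) as a lower bound on $\lambda_{1}$: for each fixed $\nu$ the sharpest such bound is obtained by maximizing the rearranged right-hand side over $i\in\{2,\ldots,n\}$, and optimizing over $\nu\in S_{n-1}$ yields the displayed $\min$--$\max$ formula. I expect the only delicate point to be the irreducibility clause, where one needs to confirm that the only obstruction is a vanishing off-diagonal entry $x_i$; the highly transparent pattern of (\ref{mX}) should make this a short case analysis rather than a serious obstacle.
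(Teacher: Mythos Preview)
Your approach is essentially the paper's: both express the first row as $\mathbf{x}^{T}=M^{-1}\Lambda_{\nu}^{T}$ via Remark~\ref{arbitrary_list} and Lemma~\ref{mM}, and read off (\ref{ineq_perm}) as the nonnegativity of the entries $x_{i}$ for $2\le i\le n$. You actually go further than the paper, whose proof neither spells out the algebraic rearrangement, nor addresses the irreducibility clause, nor unpacks the Guo index formula; your treatment of these points is correct.
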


\begin{proof}
If there exits a permutation $\nu \in S_{n-1}$ such that the inequality in (\ref{ineq_perm}) holds then the vector 
$\mathbf{x^{T}}=M^{-1}\mathbf{\Lambda^{T}_{\nu}},$ 
is nonnegative, where $$\mathbf{\Lambda_{\nu}}=\left(\lambda_1,\lambda_{\nu(2)},\ldots,\lambda_{\nu(n)}\right),$$ by the matrix equations in Remark \ref{arbitrary_list} we conclude that a nonnegative $X$-like permutative matrix whose first row is $\mathbf{x}$ has the components of $\Lambda$ as eigenvalues. 
Reciprocally, if the components of $\Lambda$ are the eigenvalues of a nonnegative $X$-like permutative matrix whose first row is $\mathbf{x}$, by Remark \ref{arbitrary_list} $\mathbf{x}^{T}=M^{-1}\Lambda^{T}_{\mu} $ for some permutation $\mu$ in $S_{n-1}$, then by the non-negativity of $\mathbf{x}$ the inequalities in (\ref{ineq_perm}) hold for the  permutation $\mu$.
\end{proof}

\bigskip

\noindent By simplicity, during the paper, a $X $-like permutative matrix whose first row is $\mathbf{x}=\left(x_1,\ldots,x_n\right)$ is written 
\begin{eqnarray*}
per_X\left(x_1,\ldots,x_n\right) \quad \text{or} \quad per_X\left(\mathbf{x}\right).
\end{eqnarray*}
\bigskip
\noindent For $\mathbf{\tau}$-permutative or $\cdot \ $-like permutative matrices an analogous property of circulant matrices was proven in \cite{MAR}.

\begin{proposition}
\label{cristi}
Let $\left\{ A_{i}\right\} _{i=1}^{k}$ be a family of pairwise permutatively equivalent square matrices.
Let $\left\{ \gamma _{i}\right\} _{i=1}^{k}$ be a set of complex numbers. Consider
\begin{equation}
A=\sum\limits_{i=1}^{k}\gamma _{i}A_{i}.
\end{equation}
Then $A$ is a $A_{1}$-like permutative matrix.
\end{proposition}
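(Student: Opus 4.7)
The plan is to unwind the definition of \textbf{permutatively equivalent} and then exploit linearity of the row-construction with respect to the generating vector.

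First, I would invoke Definition \ref{ept}: since $\{A_i\}_{i=1}^{k}$ are pairwise permutatively equivalent, there is a single $n$-tuple $\mathbf{\tau}=(\tau_1,\ldots,\tau_n)$ of permutations (with $\tau_1 = \mathrm{id}$) such that, for each $i$, one can write $A_i = \tau(\mathbf{a}^{(i)})$ for some vector $\mathbf{a}^{(i)} = (a^{(i)}_1,\ldots,a^{(i)}_n) \in \mathbb{C}^n$. Concretely, the $j$-th row of $A_i$ is $\tau_j(\mathbf{a}^{(i)}) = (a^{(i)}_{\tau_j(1)},\ldots,a^{(i)}_{\tau_j(n)})$.

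Next I would define the candidate generating vector
\begin{equation*}
\mathbf{b} = \sum_{i=1}^{k} \gamma_i \mathbf{a}^{(i)} \in \mathbb{C}^n,
\end{equation*}
and show, by a direct entrywise computation, that $A = \tau(\mathbf{b})$. Indeed, the $(j,\ell)$-entry of $A$ is $\sum_i \gamma_i a^{(i)}_{\tau_j(\ell)}$, which is exactly $b_{\tau_j(\ell)}$, i.e.\ the $\ell$-th entry of $\tau_j(\mathbf{b})$. Hence every row of $A$ is the corresponding permutation $\tau_j$ applied to the common vector $\mathbf{b}$, so $A$ is $\mathbf{\tau}$-permutative with the same $\mathbf{\tau}$ as $A_1$. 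By the definition of permutatively equivalent matrices (common $\mathbf{\tau}$), $A$ is then $A_1$-like permutative, which is the desired conclusion.

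I do not expect any real obstacle: the statement is essentially the observation that the assignment $\mathbf{a}\mapsto \tau(\mathbf{a})$ is linear in $\mathbf{a}$, so complex linear combinations of $\mathbf{\tau}$-permutative matrices remain $\mathbf{\tau}$-permutative. The only point that deserves a word of care is the shared convention $\tau_1 = \mathrm{id}$, which ensures the first row of $A$ is precisely $\mathbf{b}$ and therefore that $A_1$-like permutative equivalence (rather than mere $\mathbf{\tau}$-permutativity) is the right conclusion.
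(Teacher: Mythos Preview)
Your proof is correct and is precisely the natural linearity argument: the map $\mathbf{a}\mapsto\tau(\mathbf{a})$ is $\mathbb{C}$-linear, so a linear combination of $\tau$-permutative matrices is $\tau$-permutative with generating vector the corresponding linear combination. The paper itself does not supply a proof of this proposition---it attributes the result to \cite{MAR}---so there is no in-paper argument to compare against; your direct entrywise verification is exactly what one would expect and nothing is missing.
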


\section{Matrices partitioned into circulant blocks}

\noindent The class of circulant matrices and their properties are introduced in \cite{Karner}. 
Let $\mathbf{a}=\left( a_{0},a_{1},\ldots ,a_{m-1}\right)$ be $m$-tuple of complex numbers.

\begin{definition} \cite{Karner}
A \emph{\ real circulant matrix} is a matrix of the form
\begin{equation*}
circ\left( \mathbf{a}\right) =
\begin{pmatrix}
a_{0} & a_{1} & \ldots  & \ldots & a_{m-1} \\
a_{m-1} & a_{0} & a_{1} & \ldots & a_{m-2} \\
a_{m-2} & \ddots  & \ddots  & \ddots  & \vdots  \\
\vdots  & \ddots  & \ddots  & a_{0} & a_{1} \\
a_{1} & \ldots  & a_{m-2} & a_{m-1} & a_{0}
\end{pmatrix}
\end{equation*}
\end{definition}

\bigskip

\noindent The next concepts can be seen in \cite{Karner}. The entries of the unitary discrete Fourier transform (DFT) matrix $F=\left(
f_{pq}\right) $ are given by
\begin{equation}
\label{fourier-m}
f_{pq}=\frac{1}{\sqrt{n}}\omega ^{pq},\quad 0\leq p, q\leq m-1\ ,
\end{equation}%
where
\begin{equation}
\label{omega_root}
\omega =\cos \frac{2\pi }{m}+i\sin \frac{2\pi }{m}.
\end{equation}%

\noindent The following results characterize the circulant spectra.

\begin{theorem}
\label{teorema 22}
\cite{Karner} Let $\mathbf{a}=(a_0,\ldots,a_{m-1})$ and $A(\mathbf{a})=circ(a_0,\ldots,a_{m-1}).$ Then $$A\left( \mathbf{a}\right) =FD \left( \mathbf{a}\right) F^{\ast },$$ with
$$
D\left( \mathbf{a}\right) =diag\left( \lambda _{0}\left( \mathbf{a}\right) ,\lambda
_{1}\left( \mathbf{a}\right) ,\ldots ,\lambda _{m-1}\left( \mathbf{a}\right) \right) $$
and
\begin{eqnarray}
\label{eigenvalues}
\text{\ }\lambda _{k}\left( \mathbf{a}\right) =\sum\limits_{\ell=0}^{m-1}a_{\ell}\omega
^{k\ell}\text{,\quad\ }0\leq k \leq m-1.
\end{eqnarray}
\end{theorem}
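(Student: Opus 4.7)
The plan is to diagonalize the basic cyclic shift and then note that every circulant is a polynomial in that shift. Concretely, let $S$ denote the cyclic shift matrix $circ(0,1,0,\ldots,0)$, i.e.\ the matrix with $S_{r,c}=1$ iff $c\equiv r+1 \pmod{m}$ and zeros elsewhere. A direct inspection of the definition of $circ(\mathbf{a})$ gives $A(\mathbf{a})_{r,c}=a_{(c-r)\bmod m}$, and since $(S^{\ell})_{r,c}=1$ iff $c\equiv r+\ell\pmod m$, one reads off the decomposition
\begin{equation*}
A(\mathbf{a})=\sum_{\ell=0}^{m-1}a_{\ell}\,S^{\ell}.
\end{equation*}

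Next I would diagonalize $S$ using the DFT basis. Let $v_{k}$ be the $k$-th column of $F$, namely $(v_{k})_{r}=\frac{1}{\sqrt{m}}\omega^{rk}$ for $0\le r\le m-1$. Direct computation gives
\begin{equation*}
(Sv_{k})_{r}=(v_{k})_{r+1 \bmod m}=\tfrac{1}{\sqrt{m}}\omega^{(r+1)k}=\omega^{k}(v_{k})_{r},
\end{equation*}
so $Sv_{k}=\omega^{k}v_{k}$. The orthonormality of $\{v_{k}\}$ (hence the unitarity $FF^{\ast}=I$) follows from the standard root-of-unity identity $\sum_{\ell=0}^{m-1}\omega^{(p-q)\ell}=m\,\delta_{pq}$, which is immediate from the geometric sum together with the fact that $\omega$ in \eqref{omega_root} is a primitive $m$-th root of unity. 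Consequently
\begin{equation*}
S=F\,\Omega\,F^{\ast},\qquad \Omega:=\mathrm{diag}(\omega^{0},\omega^{1},\ldots,\omega^{m-1}),
\end{equation*}
and therefore $S^{\ell}=F\,\Omega^{\ell}F^{\ast}$ for every $\ell$.

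Combining the two steps I obtain
\begin{equation*}
A(\mathbf{a})=\sum_{\ell=0}^{m-1}a_{\ell}\,S^{\ell}
=F\Bigl(\sum_{\ell=0}^{m-1}a_{\ell}\,\Omega^{\ell}\Bigr)F^{\ast}
=F\,D(\mathbf{a})\,F^{\ast},
\end{equation*}
where the $(k,k)$-entry of $D(\mathbf{a})$ is $\sum_{\ell=0}^{m-1}a_{\ell}\omega^{k\ell}=\lambda_{k}(\mathbf{a})$, exactly the formula \eqref{eigenvalues}.

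There is no substantial obstacle here; the result is classical and the content is entirely in the bookkeeping. The only point that demands care is matching the paper's orientation convention for $circ(\mathbf{a})$ (rows shifting \emph{to the right}) with the correct indexing of $S$, so that one really obtains $A(\mathbf{a})=\sum_{\ell}a_{\ell}S^{\ell}$ and not a transposed version. Once this is pinned down, the remaining verifications ($Sv_{k}=\omega^{k}v_{k}$ and $FF^{\ast}=I$) are one-line computations with $\omega$.
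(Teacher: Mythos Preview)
Your argument is correct and is the classical proof: write the circulant as a polynomial in the cyclic shift $S$, diagonalize $S$ via the DFT eigenvectors, and read off the eigenvalues as the discrete Fourier transform of the first row. The only remark is that the paper itself does not supply a proof of this theorem at all---it is quoted from \cite{Karner} as a known result---so there is nothing to compare against; your write-up simply fills in what the cited reference contains. (Incidentally, the normalization $\tfrac{1}{\sqrt{n}}$ in the paper's display \eqref{fourier-m} is a typo for $\tfrac{1}{\sqrt{m}}$, which you have silently corrected.)
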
 

\begin{corollary}
\label{fund} Let $\mathbf{a}$ defined as in Theorem \ref{teorema 22} and consider $$\Lambda=\Lambda(\mathbf{a})=\left( \lambda _{0}\left( \mathbf{a}\right) ,\lambda _{1}\left(
\mathbf{a}\right) ,\ldots ,\lambda _{m-1}\left( \mathbf{a}\right) \right) .$$
Then,
\begin{eqnarray}
\label{coefficients}
a_{k}=\frac{1}{m}\sum\limits_{\ell=0}^{m-1}\lambda _{\ell}\omega ^{-k\ell}\text{%
,\quad } 0 \leq k \leq m-1.
\end{eqnarray}
\end{corollary}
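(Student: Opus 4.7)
The plan is to invert the Fourier relation stated in Theorem \ref{teorema 22}. Concretely, starting from
$$\lambda_k(\mathbf{a})=\sum_{\ell=0}^{m-1}a_{\ell}\,\omega^{k\ell},\qquad 0\le k\le m-1,$$
I would multiply both sides by $\omega^{-kj}$ for an arbitrary but fixed index $j\in\{0,\ldots,m-1\}$ and sum over $k$. Exchanging the two finite sums yields
$$\sum_{k=0}^{m-1}\lambda_k(\mathbf{a})\,\omega^{-kj}=\sum_{\ell=0}^{m-1}a_{\ell}\sum_{k=0}^{m-1}\omega^{k(\ell-j)}.$$

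The next step is to invoke the standard orthogonality relation for the primitive $m$-th root of unity $\omega$ given in (\ref{omega_root}), namely that $\sum_{k=0}^{m-1}\omega^{k(\ell-j)}$ equals $m$ when $\ell\equiv j\pmod{m}$ and $0$ otherwise. This collapses the inner sum to $m\,a_j$ and gives exactly the desired identity (\ref{coefficients}) after dividing by $m$.

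Alternatively, the same conclusion can be read off at the matrix level: Theorem \ref{teorema 22} gives $A(\mathbf{a})=F D(\mathbf{a})F^{\ast}$, so $D(\mathbf{a})=F^{\ast}A(\mathbf{a})F$. Writing out the entries of $\mathbf{a}$ from the first row of $A(\mathbf{a})$ and using the explicit form of $F$ in (\ref{fourier-m}) produces the inversion formula directly; this is essentially just observing that the matrix $(\omega^{k\ell})_{k,\ell}=\sqrt{m}\,F$ is invertible with inverse $\frac{1}{m}(\omega^{-k\ell})_{k,\ell}$. Either route works, and I would present the first one since the orthogonality computation is completely elementary; there is no genuine obstacle to overcome here, so the only care needed is the bookkeeping of the interchange of sums and the clean statement of the root-of-unity identity.
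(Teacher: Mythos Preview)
Your proposal is correct and is exactly the standard inversion of the discrete Fourier transform; the paper itself does not supply a proof for this corollary, treating it as an immediate consequence of Theorem \ref{teorema 22} (cited from \cite{Karner}), so your argument is precisely what is implicitly intended.
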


\noindent In \cite{AMRH}, using the techniques in \cite{MAR} for block matrices with circulant blocks, the following spectral result was proven.

\begin{theorem}
\cite{AMRH}
\label{main}
Let $K$ be an algebraically closed field of characteristic $0$ and suppose
that $A=\left( A(i,j)\right) $ is an $mn$-by-$mn$ matrix partitioned into $n^2$ circulant blocks of $m$-by-$m$ matrices, where for $ 1\leq i,j\leq n,$
\begin{equation}
\label{mtcsa}
A=\left( A(i,j)\right), \ A(i,j)=circ\left(\mathbf{a}(i,j)\right),
\end{equation}
where
\begin{eqnarray*}
\mathbf{a}(i,j)=(a_0(i,j),\ldots,a_{m-1}(i,j)),
\\
a_k(i,j)\in K,\ 1\leq i,j\leq n, \quad  0\leq k\leq m-1.
\end{eqnarray*}
Then
\begin{eqnarray}
\label{unionofsets}
\sigma \left( A\right) =\bigcup_{k=0}^{m-1}\sigma \left( S_k\right), 
\end{eqnarray}
where
\begin{align}
\label{mtcsk}
S_k&=\left( s_k(i,j)\right)_{1\leq i,j\leq n}, \
s_k(i,j)=\sum\limits_{\ell=0}^{m-1}a_{\ell}\left(i,j\right)\omega
^{k\ell}.
\end{align}
\end{theorem}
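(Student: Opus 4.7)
The plan is to exploit simultaneous diagonalization: every circulant block of $A$ is conjugated into a diagonal matrix by one and the same transform $F$, after which a single permutation similarity rearranges the resulting block array into a block-diagonal form whose diagonal blocks are exactly $S_{0},\ldots,S_{m-1}$.

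First, apply Theorem \ref{teorema 22} blockwise. For every $1\le i,j\le n$, $A(i,j)=F\,D(\mathbf{a}(i,j))\,F^{\ast}$, and by (\ref{eigenvalues}) the diagonal entries of $D(\mathbf{a}(i,j))$ are precisely the scalars $s_{0}(i,j),\ldots,s_{m-1}(i,j)$ defined in (\ref{mtcsk}). Let $U$ be the $mn\times mn$ block-diagonal matrix with $n$ copies of $F$ along its block diagonal. Then the $(i,j)$ block of $U^{\ast}AU$ is $F^{\ast}A(i,j)F=D(\mathbf{a}(i,j))$, so $U^{\ast}AU$ is an $n\times n$ block matrix whose $n^{2}$ blocks are all $m\times m$ diagonal matrices, with the $((i,k),(j,k))$ entry equal to $s_{k}(i,j)$ and all entries with $k\neq k'$ equal to zero.

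Second, index the rows and columns of $U^{\ast}AU$ by the pairs $(i,k)$ with $1\le i\le n$ and $0\le k\le m-1$ in lexicographic order, and let $P$ be the permutation matrix that reorders these pairs so that $k$ becomes the outer index and $i$ the inner one. Conjugation by $P$ moves the nonzero entry $s_{k}(i,j)$ from position $((i,k),(j,k))$ to position $((k,i),(k,j))$, so $P^{T}U^{\ast}AUP$ is block-diagonal with $m$ blocks of size $n$, and its $k$-th diagonal block has $(i,j)$ entry $s_{k}(i,j)$, i.e.\ equals $S_{k}$. Since similarity preserves the spectrum (with multiplicities) and the spectrum of a block-diagonal matrix is the union of the spectra of its diagonal blocks, identity (\ref{unionofsets}) follows.

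The argument is essentially bookkeeping and has no serious obstacle. The only points requiring attention are that, because $K$ is algebraically closed of characteristic $0$, a primitive $m$-th root of unity $\omega\in K$ exists and the Vandermonde-type matrix $F$ in (\ref{fourier-m}) is invertible over $K$, so Theorem \ref{teorema 22} applies verbatim; the normalization $1/\sqrt{m}$ in (\ref{fourier-m}) plays no role in the similarity. The mildly technical piece is the definition of $P$: writing indices as pairs and swapping the two coordinates is exactly the ``block transpose'' one uses to convert a block matrix of diagonal blocks into a diagonal matrix of blocks, which is what makes the $S_{k}$ appear.
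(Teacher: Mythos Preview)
Your argument is correct: conjugating by $I_{n}\otimes F$ turns every block into a diagonal matrix, and the perfect-shuffle permutation then reassembles these into the block-diagonal matrix $\mathrm{diag}(S_{0},\ldots,S_{m-1})$. The present paper does not actually prove Theorem~\ref{main}---it is quoted verbatim from \cite{AMRH}---so there is no in-paper proof to compare against; that said, simultaneous diagonalization by the Fourier matrix followed by a shuffle is the canonical proof of this result and is what the cited reference uses as well.
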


\noindent The next result is a direct consequence of Theorem \ref{main} and of (\ref{mtcsk}).
\begin{corollary}
\cite{AMRH}
\label{choose}
Consider $\mathbf{a}(u,v)=(a_0(u,v),\ldots, a_{m-1}(u,v))$ and $A(u,v)=circ\left(\mathbf{a}(u,v)\right).$ For $1\leq \ell \leq m$, let matrix $S_{\ell}$, related to the circulant blocks $A(u,v)$ defined in (\ref{mtcsk}). For $ 0\leq k \leq m-1,$ let 
\begin{eqnarray*}
L_k&=&\frac{1}{m}\sum\limits_{\ell=0}^{m-1} S _{\ell}\omega ^{-k\ell}, \qquad \text{then} \\
L_k
&=&\begin{pmatrix}
a_k(1,1) & a_k(1,2)&\ldots& a_k(1,m)\\
\vdots&\vdots&\ddots&\vdots\\
a_k(m,1) & a_k(m,2)&\ldots& a_k(m,m)
\end{pmatrix},
\end{eqnarray*}

In consequence, the matrix $A$ in (\ref{mtcsa}) is nonnegative if and only if for all $0\leq k \leq m-1$ the matrix $L_k$ is nonnegative and, in this case the matrix $S_{0}$ is nonnegative. 
\end{corollary}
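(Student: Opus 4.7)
The plan is to read off the formula for $L_k$ by applying Corollary \ref{fund} entrywise to each block position $(u,v)$, and then to extract the two nonnegativity statements as immediate consequences.

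First I would fix a pair of indices $1\leq u,v \leq n$ and focus on the scalar sequence $\bigl(a_0(u,v),\ldots,a_{m-1}(u,v)\bigr)$. By the definition in (\ref{mtcsk}), the $(u,v)$-entry of $S_\ell$ equals
\begin{equation*}
s_\ell(u,v)=\sum_{k=0}^{m-1}a_k(u,v)\,\omega^{\ell k},
\end{equation*}
which by Theorem \ref{teorema 22} is precisely the $\ell$-th eigenvalue of the circulant block $A(u,v)=circ(\mathbf{a}(u,v))$. Thus the list $\bigl(s_0(u,v),\ldots,s_{m-1}(u,v)\bigr)$ is exactly the spectrum $\Lambda(\mathbf{a}(u,v))$ in the sense of Corollary \ref{fund}, so the inversion formula (\ref{coefficients}) gives
\begin{equation*}
a_k(u,v) = \frac{1}{m}\sum_{\ell=0}^{m-1} s_\ell(u,v)\,\omega^{-k\ell}.
\end{equation*}
The right-hand side is, by definition of matrix scalar multiplication and addition, the $(u,v)$-entry of $\frac{1}{m}\sum_{\ell=0}^{m-1} S_\ell\,\omega^{-k\ell}=L_k$. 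This establishes the block form of $L_k$ claimed in the statement.

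For the nonnegativity equivalence, I would use that a block-partitioned matrix $A$ of the form (\ref{mtcsa}) is nonnegative if and only if every block $A(u,v)$ is nonnegative, and a circulant block is nonnegative if and only if all of its defining coefficients $a_k(u,v)$ are nonnegative. Collecting these scalar conditions across $(u,v)$ for each fixed $k$ is exactly the requirement $L_k\geq 0$. Hence $A\geq 0$ holds if and only if $L_k\geq 0$ for every $0\leq k\leq m-1$. Finally, substituting $k=0$ in (\ref{mtcsk}) yields $s_0(u,v)=\sum_{\ell=0}^{m-1}a_\ell(u,v)$; under the hypothesis $A\geq 0$ every summand is nonnegative, so $S_0\geq 0$.

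The argument carries no serious mathematical obstacle, since the formula for $L_k$ is nothing more than the scalar inverse DFT (\ref{coefficients}) applied componentwise. The main point requiring care is the bookkeeping between the two indexing layers, namely the block indices $(u,v)$ used to build $S_\ell$ from the circulant coefficients versus the internal index $k$ used inside each circulant block, so that the inverse transform is applied at the correct level when passing from the $S_\ell$ to the $L_k$.
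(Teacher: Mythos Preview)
Your proof is correct and matches the paper's approach: the paper does not give an explicit proof but simply states that the corollary is a direct consequence of Theorem~\ref{main} and~(\ref{mtcsk}), which is precisely what you carry out by applying the inverse DFT formula~(\ref{coefficients}) of Corollary~\ref{fund} entrywise at each block position $(u,v)$. The nonnegativity equivalence and the observation about $S_0$ are handled exactly as intended.
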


\noindent From Theorem \ref{main} the following inverse result was obtained.

\begin{corollary}
\cite{AMRH}
\label{converse2}
Let $\left( S_{\ell }^{'}\right) _{\ell =0}^{m-1}$ be $m$ $n$-by-$n$ complex matrices. Then there exists a matrix $A^{'}$ partitioned into blocks where each block is circulant and whose spectrum is given by
\begin{eqnarray}
\label{unionofsets2}
\sigma \left( A^{'}\right) =\bigcup_{\ell=0}^{m-1}\sigma \left( S_{\ell}^{'}\right),
\end{eqnarray}
\end{corollary}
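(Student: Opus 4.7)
The plan is to invert Theorem \ref{main} entrywise using the DFT inversion formula from Corollary \ref{fund}. Given the $m$ target matrices $S'_0,\ldots,S'_{m-1}$, I want to produce an $mn$-by-$mn$ block matrix $A'=\bigl(A'(i,j)\bigr)_{1\le i,j\le n}$ whose blocks $A'(i,j)$ are $m$-by-$m$ circulants and whose associated matrices $S_k$ (as defined in (\ref{mtcsk})) coincide with $S'_k$ for every $0\le k\le m-1$; then Theorem \ref{main} delivers the desired spectral identity for free.

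Concretely, for each pair $(i,j)$ with $1\le i,j\le n$, I fix the scalar values $s'_\ell(i,j):=\bigl(S'_\ell\bigr)_{ij}$ for $\ell=0,\ldots,m-1$ and define, in analogy with (\ref{coefficients}),
\begin{equation*}
a_k(i,j)\;=\;\frac{1}{m}\sum_{\ell=0}^{m-1}s'_\ell(i,j)\,\omega^{-k\ell},\qquad 0\le k\le m-1,
\end{equation*}
where $\omega$ is the primitive $m$-th root of unity in (\ref{omega_root}). I then set $A'(i,j)=circ(a_0(i,j),\ldots,a_{m-1}(i,j))$ and assemble $A'$ block by block.

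The first key step is to verify that the inversion applied at the scalar level is a true inverse: by Corollary \ref{fund} (or directly by orthogonality of $\{\omega^{k\ell}\}$), the identity
\begin{equation*}
\sum_{\ell=0}^{m-1}a_\ell(i,j)\,\omega^{k\ell}\;=\;s'_k(i,j)
\end{equation*}
holds for every $(i,j)$ and every $k$. Reading this entrywise over the index pairs $(i,j)$ recovers precisely $S_k=S'_k$ in the sense of (\ref{mtcsk}). The second step is to invoke Theorem \ref{main} applied to $A'$, which yields $\sigma(A')=\bigcup_{k=0}^{m-1}\sigma(S_k)=\bigcup_{k=0}^{m-1}\sigma(S'_k)$, as required.

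There is essentially no obstacle: the construction is purely algebraic and lives over $\mathbb{C}$, so no positivity, reality, or irreducibility conditions need to be preserved (the statement asserts nothing beyond the spectral equality, and in particular does not demand that $A'$ be real or nonnegative). The only thing worth flagging in the write-up is that the DFT inversion in Corollary \ref{fund} is stated for scalar tuples, so one should either apply it entrywise or, equivalently, note that $F$ and $F^{\ast}$ act on the $m$-tuple $\bigl(s'_0(i,j),\ldots,s'_{m-1}(i,j)\bigr)$ independently for each fixed $(i,j)$.
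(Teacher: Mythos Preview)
Your proposal is correct and follows exactly the approach the paper intends: the paper does not spell out a proof here (the corollary is cited from \cite{AMRH} and merely said to follow from Theorem \ref{main}), but your entrywise DFT inversion is precisely the construction encoded in Corollary \ref{choose}, where the matrices $L_k=\frac{1}{m}\sum_{\ell}S_\ell\omega^{-k\ell}$ recover the first-row entries $a_k(i,j)$ of the circulant blocks. So there is no genuine difference in method, and your write-up is sound.
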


\noindent The following notions will be used in the sequel.

\begin{definition}
\cite{AMRH}
A matrix partitioned into blocks is called \textit{block permutative matrix} when all its row blocks (up to the first one) are permutations of precisely its first row block.
\end{definition}

\noindent The class of  the block $X$-like permutative  matrices is defined as follows.

\begin{definition}
Let $X=\left(x_{uv}\right)$ be the square matrix in (\ref{mX}). A block permutative matrix with circulant blocks $A=\left(circ(\mathbf{a}(u.v)\right)$ is a block $X$-like permutative matrix providing that, for all  $1\leq u,v\leq n$,
the block $circ\left(\mathbf{a}(u,v)\right)$ is the $(u,v)$-th block in $A$ if and only if $x_{uv}$ is the $(u,v)$-th entry in $X$.
\end{definition} 
\begin{theorem}
\cite{AMRH}
Let $A$ be the matrix partitioned into blocks as defined in (\ref{mtcsa}). For $0\leq k \leq m-1$ let $S_k$ be the class of matrices related to $A$ defined in (\ref{mtcsk}). The matrix $A$ is a block permutative matrix with circulant blocks if and only if the matrices $S_k$ are pairwise permutatively equivalent.
\end{theorem}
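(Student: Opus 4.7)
The plan is to exploit the Fourier-type duality of Corollary \ref{choose}: the family $(S_k)_{k=0}^{m-1}$ and the family $(L_k)_{k=0}^{m-1}$, where $L_k=(a_k(i,j))_{i,j}$ is the block-level slice collecting the $k$-th circulant entry of every block, determine each other by formulas that act linearly and entrywise in the block indices $(i,j)$. Such formulas commute with any rearrangement of those indices, so a common block-level permutation vector for $A$ translates directly into a common permutation vector for all the $S_k$, and vice versa. This is exactly the content that needs to be unpacked in both directions.

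For the forward implication, I would start from the assumption that $A$ is block permutative along some $\mathbf{\tau}=(\tau_1,\ldots,\tau_n)$ with $\tau_1=\mathrm{id}$, so that $A(i,j)=A(1,\tau_i(j))$ for all $1\le i,j\le n$. Equating the two circulant generators forces $a_\ell(i,j)=a_\ell(1,\tau_i(j))$ for every $0\le \ell\le m-1$, and substituting this into the defining relation (\ref{mtcsk}) yields
\[
S_k(i,j)=\sum_{\ell=0}^{m-1}a_\ell(1,\tau_i(j))\,\omega^{k\ell}=S_k(1,\tau_i(j)),\qquad 0\le k\le m-1.
\]
Thus every $S_k$ is $\mathbf{\tau}$-permutative with the \emph{same} $\mathbf{\tau}$, which is by definition pairwise permutative equivalence.

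For the converse, I would assume a single $\mathbf{\tau}$ for which $S_k(i,j)=S_k(1,\tau_i(j))$ holds for every $k$, and invert the relation using the formula
\[
a_k(i,j)=\frac{1}{m}\sum_{\ell=0}^{m-1}S_\ell(i,j)\,\omega^{-k\ell}
\]
furnished by Corollary \ref{choose}. Replacing each $S_\ell(i,j)$ by $S_\ell(1,\tau_i(j))$ inside the sum yields $a_k(i,j)=a_k(1,\tau_i(j))$ for all $k$, hence $\mathbf{a}(i,j)=\mathbf{a}(1,\tau_i(j))$, and therefore $A(i,j)=\mathrm{circ}(\mathbf{a}(i,j))=\mathrm{circ}(\mathbf{a}(1,\tau_i(j)))=A(1,\tau_i(j))$. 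This is precisely the definition of $A$ being block permutative along $\mathbf{\tau}$.

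The only real subtlety is a bookkeeping one: permutative equivalence is a \emph{common}-$\mathbf{\tau}$ requirement, so in both directions the essential point to emphasize is that the permutation vector transferred between $A$ and the collection $\{S_k\}$ is independent of $k$. Once it is noted that the DFT relation (\ref{mtcsk}) and its inverse in Corollary \ref{choose} are applied entrywise in $(i,j)$, both implications follow immediately from linearity and the fact that a fixed permutation $\tau_i$ of the index $j$ passes through the sum in $\ell$.
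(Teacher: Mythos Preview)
Your argument is correct. Note, however, that the present paper does not actually prove this theorem: it is quoted from \cite{AMRH} with a citation and no proof is supplied here, so there is no in-paper proof to compare your attempt against. Your approach --- transferring a common block-level permutation vector $\mathbf{\tau}$ back and forth between the slices $L_k=(a_k(i,j))_{i,j}$ and the matrices $S_k$ via the entrywise DFT relation (\ref{mtcsk}) and its inverse in Corollary~\ref{choose} --- is the natural one and is almost certainly the argument in the cited source as well. The one point worth stating a shade more explicitly is that two circulant matrices coincide if and only if their generating vectors coincide, which is what justifies passing from $A(i,j)=A(1,\tau_i(j))$ to $a_\ell(i,j)=a_\ell(1,\tau_i(j))$ for every $\ell$; you use this implicitly in both directions.
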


\section{An inverse problem related to block permutative matrices with circulant blocks}

\noindent In this section we study the Guo index to block permutative matrices with circulant blocks, which are a subclass of permutative matrices. Let $\mathbf{q}_1,\mathbf{q}_2,\ldots,\mathbf{q}_m$ denotes the canonical vectors

\noindent To our purpose we establish the following result. 
\begin{theorem}
\label{strongguo}
Let $E=(\varepsilon_{ij})$ be an $n$-by-$m$ matr\noindent ix where the multiset $\left\{ E\right\} $ formed by the entries of $E$ is closed under complex conjugation, the set of the entries in the first column of $E$ is a $X$-like permutative list and the sum of all row sums of $E$ is nonnegative. Suppose that $\varepsilon_{11}$ is positive and has the largest absolute value among the absolute values of entries of $E$.  Moreover, for $1\leq \ell \leq \left \lfloor \tfrac{m}{2} \right \rfloor,$
\begin{eqnarray}
\label{conjugatecondition3}
E\mathbf{q}_{\ell+1}=\overline{E\mathbf{q}_{m-\ell+1}}
\end{eqnarray}
that is, the entries of the $(m-\ell+1)$-th column of $E$ are the corresponding complex conjugate entries of the $(\ell+1)$-th column of $E.$ Note that for $m=2h$ the condition in (\ref{conjugatecondition3}) implies that the column $Eq_{h+1}$ of $E$ has real entries.  
If
\begin{eqnarray}
\label{new1}
\varepsilon_{11}\geq \Phi
\end{eqnarray}
with
\[
\Phi =\max_{\substack{ 0\leq k\leq m-1 \\ 0\leq j\leq n-1}}-\left[
\sum_{p=1}^{n-1}\varepsilon _{(p+1)1}+\sum_{\ell
=1}^{m-1}\sum_{p=0}^{n-1}\varepsilon _{(p+1)(\ell +1)}\omega ^{-k\ell
}-n\sum_{\ell =1}^{m-1}\varepsilon _{(p+1)(\ell +1)}\omega ^{-k\ell }\right]
\]            
Then $\left \{ E \right \}$ is the spectrum of a nonnegative $X$-like block permutative matrix $A$ whose blocks are circulant.
\end{theorem}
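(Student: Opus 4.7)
The plan is to construct, for each of the $m$ columns of $E$, an $n \times n$ $X$-like permutative matrix $S_\ell$ whose spectrum is that column, and then glue the $S_\ell$'s into the sought $mn \times mn$ block matrix $A$ with circulant blocks by an inverse discrete Fourier transform. The inequality $\varepsilon_{11} \ge \Phi$ will turn out to be exactly the family of scalar inequalities that guarantees $A \ge 0$.

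For each $0 \le \ell \le m-1$ I would set $\Lambda^{(\ell)} = E\mathbf{q}_{\ell+1}$, put $\mathbf{x}^{(\ell)T} = M^{-1}(\Lambda^{(\ell)})^{T}$ as in Remark \ref{arbitrary_list}, and let $S_\ell = per_X(\mathbf{x}^{(\ell)})$. For $\ell = 0$ the hypothesis that the first column of $E$ is an $X$-like permutative list, together with Theorem \ref{necessary_suff}, lets me order $\Lambda^{(0)}$ so that $S_0 \ge 0$ (in particular real). For $1 \le \ell \le \lfloor (m-1)/2 \rfloor$ I order column $m-\ell+1$ entrywise as the complex conjugate of column $\ell+1$ -- which is consistent with (\ref{conjugatecondition3}) -- and, since $M^{-1}$ is real, this forces $\mathbf{x}^{(m-\ell)} = \overline{\mathbf{x}^{(\ell)}}$ and hence $S_{m-\ell} = \overline{S_\ell}$. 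When $m = 2h$ the middle column is real by (\ref{conjugatecondition3}), so $S_h$ is real too. I then assemble $A$: set $a_k(i,j) = \tfrac{1}{m}\sum_{\ell=0}^{m-1}(S_\ell)_{ij}\omega^{-k\ell}$, $A(i,j) = circ(a_0(i,j),\ldots,a_{m-1}(i,j))$, and $A = (A(i,j))$. By (\ref{mtcsk}) the frequency-$k$ matrix recovered from the circulants of $A$ is exactly $S_k$, so Theorem \ref{main} yields $\sigma(A) = \bigcup_\ell \sigma(S_\ell) = \{E\}$; the conjugate pairing makes each $a_k(i,j)$ a sum of conjugate pairs, so $A$ is real. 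Finally, since every $S_\ell$ is $X$-like permutative, the $S_\ell$'s are pairwise permutatively equivalent, and the characterization of block permutative matrices with circulant blocks recalled at the end of the previous section gives that $A$ is block permutative with circulant blocks; the construction in fact makes it block $X$-like permutative.

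It remains to prove $A \ge 0$. By Corollary \ref{choose} this is equivalent to $L_k \ge 0$ for every $0 \le k \le m-1$, where $L_k = \tfrac{1}{m}\sum_\ell \omega^{-k\ell}S_\ell$. Proposition \ref{cristi} shows that $L_k$ is again $X$-like permutative, so $L_k \ge 0$ is equivalent to the first row $(y_1^{(k)},\ldots,y_n^{(k)})$ of $L_k$ being entrywise nonnegative. From Lemma \ref{mM} one obtains $x_1^{(\ell)} = \tfrac{1}{n}\sum_i \varepsilon_{i,\ell+1}$ and $x_j^{(\ell)} = x_1^{(\ell)} - \varepsilon_{j,\ell+1}$ for $j \ge 2$, and therefore
\[
y_1^{(k)} = \tfrac{1}{mn}\sum_{i=1}^{n}\sum_{\ell=0}^{m-1}\varepsilon_{i,\ell+1}\omega^{-k\ell}, \qquad y_j^{(k)} = y_1^{(k)} - \tfrac{1}{m}\sum_{\ell=0}^{m-1}\varepsilon_{j,\ell+1}\omega^{-k\ell} \quad (j \ge 2).
\]
Multiplying through by $m$, peeling off the $(\ell,i) = (0,1)$ summand and rearranging recasts $y_j^{(k)} \ge 0$ as a linear lower bound on $\varepsilon_{11}$ for each pair $(k,j)$; the maximum of these bounds over $0 \le k \le m-1$ and $0 \le j \le n-1$ is precisely $\Phi$. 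Hence $\varepsilon_{11} \ge \Phi$ is exactly the condition that $L_k \ge 0$ for every $k$, which combined with the previous paragraph completes the proof. The main obstacle is this last bookkeeping step: one must carefully match the $\omega^{-k\ell}$ contributions, the factor $n$, and the index shifts so that the family of inequalities extracted from $y_j^{(k)} \ge 0$ reassembles to the stated formula for $\Phi$.
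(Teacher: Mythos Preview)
Your proposal is correct and follows essentially the same route as the paper: build $S_\ell=per_X\bigl(M^{-1}E\mathbf{q}_{\ell+1}\bigr)$ for each column, use the conjugation hypothesis to get $S_{m-\ell}=\overline{S_\ell}$, form $L_k=\tfrac{1}{m}\sum_\ell\omega^{-k\ell}S_\ell$, invoke Proposition~\ref{cristi} so that $L_k$ is $X$-like permutative, and then read off the nonnegativity of its first row as the family of scalar inequalities whose maximum is $\Phi$. Your write-up is in fact a bit more explicit than the paper's in justifying that the assembled matrix $A$ is real and block $X$-like permutative, but the argument is the same.
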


\begin{proof}

By the conditions of the statement there exists a nonnegative permutative  \[S_0:=per_X\left(
s_{00},s_{10},\ldots ,s_{(n-1)0}\right) \] whose spectrum is $\left \{ Eq_{1} \right \}$ (the set of the entries in $Eq_1$).
The condition in (\ref{conjugatecondition3}) implies that for $1\leq \ell\leq \left \lfloor \frac{m}{2} \right \rfloor$ the 
$X$-like permutative matrices $S_{\ell}$ and $S_{m-\ell}$ whose spectrum are $\left \{ E\mathbf{q}_{(\ell+1)} \right \}$ and $\left \{ E\mathbf{q}_{(m-\ell+1)} \right \}$, respectively, are related by
$\overline{S}_{\ell}=S_{m-\ell}$. For $ 1 \leq \ell \leq m-1$, 
suppose that
\begin{equation*}
S_{\ell}=per_X\left( s\left( \ell\right) \right) \text{, with }s\left( \ell\right)
=\left( s_{0\ell},s_{1\ell},\ldots ,s_{(n-1)\ell}\right) ,
\end{equation*}%
where
\begin{equation}
s\left( \ell\right)^{T} =M^{-1} E\mathbf{q}_{\ell+1} \label{circsl},
\end{equation}
then 

\begin{eqnarray}
\label{s-entries}
s_{0\ell}&=&\frac{1}{n}\left[\sum_{p=1}^{n}\varepsilon_{p(\ell+1)}\right] \quad and\\
s_{j\ell}&=&\frac{1}{n}\left[\sum_{p=1}^{n}\varepsilon_{p(\ell+1)}-n\varepsilon_{(j+1)(\ell+1)}\right], \quad 1\leq j\leq n-1.
\end{eqnarray}
For $0\leq k \leq m-1$, the entries of the $n$-by-$n$ $X$-like permutative matrix can be obtained, using equation (\ref{coefficients}) and the entries of the sums
\begin{equation}
L_k=\frac{1}{m}S_0+\frac{1}{m}\sum_{\ell =1}^{m-1}S_{\ell }\omega ^{-k\ell
}, \label{circlkp1}
\end{equation}

By Proposition \ref{cristi} the linear combination of $X$-like permutative matrices are  $X$-like permutative then matrices $L_k$ in (\ref{circlkp1})  are $X$-like permutative. Suppose that $$L_k=per_X\left(a_0(k),\ldots, a_{n-1}(k)\right).$$
From (\ref{circlkp1}), for $ 0\leq j \leq n-1,$ the following
holds

\begin{eqnarray*}
a_j(k) &=&\frac{1}{m}\left( s_{j0}+\sum_{\ell =1}^{m-1}\omega ^{-k\ell
}s_{j\ell } \right).
\end{eqnarray*}
Using (\ref{s-entries}), for $1\leq j \leq n-1$ we must have
\small{
\begin{eqnarray*}
0 \leq a_j(k)&=&\frac{1}{m}\left( s_{j0}+\sum_{\ell =1}^{m-1}\omega ^{-k\ell
}s_{j\ell } \right)\\
             & = &\small{\frac{1}{m}}\frac{1}{n}\left(
               \left[\varepsilon_{11}+\sum\limits_{p=1}^{n-1}\varepsilon_{(p+1)1}\right]+\sum_{\ell =1}^{m-1}\omega ^{-k\ell}\left[\sum\limits_{p=0}^{n-1}\varepsilon_{(p+1)(\ell+1)}-n\varepsilon_{(j+1)(\ell+1)}\right]
               \right) \\
             &= &\small{\frac{1}{mn}}\left(\varepsilon_{11}+\sum\limits_{p=1}^{n-1}\varepsilon_{(p+1)1}+\sum_{\ell =1}^{m-1}\sum\limits_{p=0}^{n-1}\varepsilon_{(p+1)(\ell+1)}\omega ^{-k\ell}-n\sum_{\ell =1}^{m-1}\omega ^{-k\ell}\varepsilon_{(j+1)(\ell+1)}\right)
\end{eqnarray*}}
\noindent for all $0\leq k\leq  m-1.$ Therefore, the last condition implies the inequality in (\ref{new1}).
\end{proof}

\begin{remark}
To computational aims it is worth note that the columns of the product $$L=\frac{1}{\sqrt{m}}M^{-1}EF^{*}$$ are the first rows of the matrices $L_0,\ldots L_{m-1}.$
\end{remark}

\noindent Now, one can formulate the following question.
\begin{problem}
Which condition (or conditions) is (or are) necessary and sufficient for the existence of a nonnegative matrix, $A$ $X$-like block permutative matrix with circulant blocks whose spectrum equals to $\left \{ E \right \}$,? where $E=\left( \varepsilon _{ij}\right)$ is as in Theorem \ref{strongguo},
\end{problem}

 \noindent Let us consider the set
\[
\boldsymbol{P}=\left\{ f:\left\{ E\right\} \rightarrow \left\{ E\right\} :f%
\text{ }is\ bijective\right\}
\]%
\begin{definition}
The function $f\in \boldsymbol{P}$ is said to be \textit{$E$-nonnegative spectrally stable ($E$-NNSS)} if
the matrix $E\left( f\right) =\left( f\left( \varepsilon _{ij}\right)
\right) $ is such that
$f\left( \varepsilon _{11}\right) $ has the maximum absolute value among $%
f\left( \varepsilon _{ij}\right), $ the first column of $E\left( f\right) $ is real and it is a $X$-like permutative list, the
$\left( m-\ell +1 \right) $-th column of $E\left( f\right) $ is the complex
conjugate column of the $\left( \ell +1\right) $-th column  of  $E\left(
f\right) $, and then
\[
E\left( f\right) \mathbf{q}_{(m-\ell +1) }=\overline{E\left( f\right) }\mathbf{q}_{\left( \ell
+1\right) },
\]%
for $1 \leq \ell \leq \left \lfloor \frac{m}{2}  \right \rfloor.$
\end{definition}

For instance:

\begin{enumerate}

\item The identity function of $E$ into $E$ is clearly $E$-NNSS.
\item   $f:\left\{ E\right\} \rightarrow \left\{ E\right\} $ defined by
\[
f\left( \varepsilon _{ij}\right) =\left \{
\begin{array}{ll}
\varepsilon _{ij} & j\neq 2 \quad {\small \text{and}} \quad j\neq m, \\
\varepsilon_{i m } & j=2, \\
\varepsilon _{i2} & j=m;
\end{array}
\right.
\]
is $E$-NNSS.
\item  $f:\left\{ E\right\} \rightarrow \left\{ E\right\} $ defined by
\[
f\left( \varepsilon _{ij}\right) =\overline{\varepsilon }_{ij}\mbox{\, for all\,} i,j
\]
is $E$-NNSS.
\item  $f:\left\{ E\right\} \rightarrow \left\{ E\right\} $ defined by
\[
f\left( \varepsilon_{ij}\right) =
\left\{
\begin{array}{ll}
\varepsilon_{ij} & \mbox{\, if \,} j=1,\\
\overline{\varepsilon}_{ij} & \mbox{\,if \,} j\neq 1;
\end{array}
\right.
\]
is $E$-NNSS.
\end{enumerate}
We denote by $\boldsymbol{P}^{*}$ the subset of $\boldsymbol{P}$ formed by all $E$-NNSS bijections of $E$.
Note that if $f \in P^{*}$ then $f(\varepsilon_{11})=\varepsilon_{11}.$

\begin{theorem}
Let $E=(\varepsilon_{ij})$ be an $n$-by-$m$ matrix such that its first row is a $X$-like permutative spectrum. Moreover, the multiset $\left\{E\right\}$ formed by the entries of $E$ satisfies the conditions in Theorem \ref{strongguo}. 
The multiset $\left \{ E \right \}$ is the spectrum of a block $X$-like permutative matrix $A$ whose blocks are circulant if and only if

\begin{eqnarray}
\label{maria}
\varepsilon_{11} &\geq &
\min_{f\in \boldsymbol{P}^{*}}
\max\left\{\Theta: 0\leq k \leq m-1, \ 0\leq j \leq n-1\right\}
\end{eqnarray}
where,
\begin{eqnarray*}
\Theta &  = &
 -\left[\sum\limits_{p=1}^{n-1}f(\varepsilon_{(p+1)1})+\sum_{\ell =1}^{m-1}\sum\limits_{p=0}^{n-1}f(\varepsilon_{(p+1)(\ell+1)})\omega ^{-k\ell}-n\sum_{\ell =1}^{m-1}\omega ^{-k\ell}f(\varepsilon_{(j+1)(\ell+1)}) \right].
\end{eqnarray*}

\end{theorem}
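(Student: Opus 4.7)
The plan is to reduce both directions of the equivalence to Theorem \ref{strongguo} applied to a rearranged matrix $E(f)$, where $f$ ranges over the class $\boldsymbol{P}^{*}$ of $E$-NNSS bijections. The key observation is that every $f\in\boldsymbol{P}^{*}$ produces an $n$-by-$m$ matrix $E(f)$ whose multiset of entries equals $\{E\}$ and which, by the very definition of NNSS, satisfies each structural hypothesis required to invoke Theorem \ref{strongguo}. This observation lets the quantity on the right of (\ref{maria}) be read as the best-case Guo index over admissible rearrangements.

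For sufficiency I would pick $f\in\boldsymbol{P}^{*}$ attaining the minimum in (\ref{maria}), so that $\varepsilon_{11}=f(\varepsilon_{11})\geq\max_{k,j}\Theta(f)$. Then $E(f)$ meets all hypotheses of Theorem \ref{strongguo}: its first column is a $X$-like permutative list, $f(\varepsilon_{11})$ is positive with maximum modulus, the column conjugation relation (\ref{conjugatecondition3}) holds, and the row-sum condition is preserved since $\{E(f)\}=\{E\}$ as multisets. Theorem \ref{strongguo} then furnishes a nonnegative block $X$-like permutative matrix with circulant blocks whose spectrum is $\{E(f)\}=\{E\}$.

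For necessity, suppose $\{E\}$ is realized by a nonnegative block $X$-like permutative matrix $A$ with circulant blocks. By Theorem \ref{main}, $\sigma(A)=\bigcup_{k=0}^{m-1}\sigma(S_{k})$, and the block $X$-like permutative hypothesis forces each $S_{k}$ to be $X$-like permutative. The real nonnegativity of $A$ forces $S_{0}$ to carry the Perron eigenvalue $\varepsilon_{11}$, and it pairs $S_{\ell}$ with $S_{m-\ell}$ via complex conjugation. I would use this splitting to define a bijection $f\in\boldsymbol{P}^{*}$ whose effect is to place $\sigma(S_{\ell})$ as the $(\ell+1)$-th column of $E(f)$, with $\sigma(S_{0})$ as its first column and $\varepsilon_{11}$ in the $(1,1)$ slot. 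Reading off the nonnegativity of the first row of the $L_{k}$ associated to $E(f)$ from formula (\ref{circlkp1}) then reproduces $\varepsilon_{11}\geq\max_{k,j}\Theta(f)$, and hence $\varepsilon_{11}\geq\min_{f\in\boldsymbol{P}^{*}}\max_{k,j}\Theta(f)$.

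The main obstacle is the bookkeeping in the necessity direction: verifying that every realization $A$ actually induces a bijection in $\boldsymbol{P}^{*}$, not merely some permutation of $\{E\}$. This amounts to checking three compatibilities between the eigenvalue decomposition of $A$ and the constraints defining NNSS functions: $\sigma(S_{0})$ must fit into the first column of $E(f)$ as a $X$-like permutative list (using Theorem \ref{necessary_suff}), the conjugation pairing $\overline{S_{\ell}}=S_{m-\ell}$ must align with the column-conjugation relation (\ref{conjugatecondition3}), and $\varepsilon_{11}$ must be pinned to the $(1,1)$ position by Perron dominance. Once this combinatorial matching is established, the analytic inequality for $\Theta$ follows by inverting (\ref{circlkp1}) through the relations (\ref{circsl})--(\ref{s-entries}), which is routine.
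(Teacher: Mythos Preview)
Your proposal is correct and follows essentially the same route as the paper: apply the construction of Theorem \ref{strongguo} to the rearranged matrix $E(f)$ for each $f\in\boldsymbol{P}^{*}$, obtain the inequality $\varepsilon_{11}\geq\max_{k,j}\Theta(f)$ as a characterization of nonnegativity of that particular construction, and then optimize over $f$. Your treatment of the necessity direction---extracting an $f\in\boldsymbol{P}^{*}$ from a given realization via the $S_\ell$'s and checking that the conjugation pairing and Perron structure force the NNSS conditions---is in fact more explicit than the paper's own proof, which handles that direction only implicitly by pointing back to the computation in Theorem \ref{strongguo}.
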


\begin{proof}
Following the same steps of the above proof this time replacing $\varepsilon_{ij}$ by $f(\varepsilon_{ij})$ we arrive at the inequality in (\ref{new1}). After taking the minimum when the function $f$ vary into $\boldsymbol{P}^{*}$, the inequality (\ref{maria}) is obtained.
\end{proof}

\begin{example}
    Let consider
    \[
    E=%
    \begin{pmatrix}
    23.9 & i & -i \\
    -3 & 1-7i & 1+7i\\
    0 & -3+i & -3-i%
    \end{pmatrix}%
    \]%
   \noindent  The nonnegative $X$-like permutative matrix $per_X(6.9667, 9.9667,6.9667) $ has the set formed by the components of $E\mathbf{q}_1$ as spectrum. In addition we have
    \begin{eqnarray*}
    L_{0} &=&per_X\left(1.8778, 2.2111, 3.8778\right)\\
    L_{1} &=&per_X\left(1.5822, 6.9570, 0,0048\right)\\
    L_{2} &=&per_X\left(3.5067, 0.7986, 3.0840\right).\\
    \end{eqnarray*}%
    
   \noindent In consequence, the circulant blocks are
    
    \begin{eqnarray*}
    A_{0} &=&circ\left(1.8778, 1.5822, 3.5067\right)\\
    A_{1} &=&circ\left(2.2111, 6.9570, 0.7986\right)\\
    A_{2} &=&circ\left(3.8778, 0,0048, 3.0840\right)\\
    \end{eqnarray*}%
    
  \noindent  Therefore, the $X$-like block permutative matrix
    \[\begin{pmatrix}
    A_0 & A_1 & A_2\\
    A_1 & A_0 & A_2\\
    A_2 & A_1 & A_0
    \end{pmatrix}\]
    
 \noindent has spectrum the multiset $\left\{E\right\}.$
    \end{example}
    
    \begin{example}
    Let consider
    \[
    E_{1}=%
    \begin{pmatrix}
    5 & -3 & -2 & -3\\
    2 & 1 & 2 & 1%
    \end{pmatrix}%
    \]%
    \noindent  The nonnegative $X$-like permutative matrix $per_X(3.5,1.5) $ has the set formed by the components of $E\mathbf{q}_1$ as spectrum. In addition we have
   \noindent the matrices
    \begin{eqnarray*}
    L_{0} =per_{X}\left(1.5,0\right), \ L_{1} =per_{X}\left(2,2\right), \ L_{2}= per_{X}\left(2.5, 2\right), \ L_{3}= per_{X}\left(2, 2\right).
    \end{eqnarray*}
    
   \noindent   In consequence, we obtain
    \begin{eqnarray*}
    A_{0} =circ\left(1.5,2, 2.5, 2\right), \ A_{1} =circ\left(0,2,2,2\right).
    \end{eqnarray*}
  
 \noindent   Therefore, the $X$-like block permutative matrix
    \[\begin{pmatrix}

    A_0 & A_1 \\
    A_1 & A_0 
    \end{pmatrix}\]
    
\noindent    has spectrum the multiset $\left\{E_1\right\}.$
    
\end{example}

\begin{example}
\noindent    Let consider
    \[
    E_{2}=%
    \begin{pmatrix}
    2.5 & 0.25i & 0 & -0.25i \\
    -1 & 0.5-i & 0 & 0.5+i%
    \end{pmatrix}%
    \]%
  \noindent  thus, it is obtained
    \begin{eqnarray*}
    L_{0}=per_X\left(0.25, 0.375\right),\ L_{1}= per_X\left(0, 0.75\right),\\& \\  L_{2}= per_X\left(0.125, 0.5\right)\ L_{3}= per_X\left(0.375, 0.125\right).
    \end{eqnarray*}
\noindent    In consequence,
    \begin{eqnarray*}
    A_{0} = circ\left(0.25,0.0, 0.125,0.375 \right),\
    A_{1} = circ\left(0.375, 0.75, 0.5, 0.125\right).
    \end{eqnarray*}

\noindent    Therefore, the $X$-like block permutative matrix
    \[\begin{pmatrix}
    A_0 & A_1 \\
    A_1 & A_0 
    \end{pmatrix}\]
    
  \noindent  has spectrum the multiset $\left\{E_{2}\right\}.$
    
\noindent By consider in $E_2(1,1)=2.49$ in a place of $E_2(1,1)=2.5$ it is obtained
\begin{eqnarray*}
    L_{0} &=&per_X\left(0.2488,0.3738\right)\\
    L_{1} &=&per_X\left(-0.0012, 0.7488\right)\\
    L_{2}&=& per_X\left(0.1238, 0.4987\right)\\
    L_{3}&=& per_X\left(0.3738, 0.1238\right).
    \end{eqnarray*}
    \noindent    Therefore, the $X$-like block permutative matrix that we can construct is no nonnegative.
\end{example}

\begin{example}
\noindent    Let consider
    \[
    E_{3}=%
    \begin{pmatrix}
    4 & 1 & 1 \\
    -1 & -2.5 & -2.5%
    \end{pmatrix}%
    \]%
  \noindent Thus are obtained the matrices
    \begin{eqnarray*}
    L_{0} =per_X\left(0,2\right),\   L_{1} =per_X\left(0.75,0.25\right),\
    L_{2}= per_X\left(0.75, 0.25\right).
    \end{eqnarray*}
\noindent    In consequence, 
    \begin{eqnarray*}
    A_{0} =circ\left(0,0.75, 0.75\right),\    A_{1} =circ\left(2,0.25,0.25\right).
    \end{eqnarray*}

\noindent    Therefore, the $X$-like block permutative matrix
    \[\begin{pmatrix}
    A_0 & A_1 \\
    A_1 & A_0 
    \end{pmatrix}\]
    
  \noindent  has spectrum the multiset $\left\{E_{3}\right\}.$
    Considering the multiset formed by the entries of $E_{1}, 4$ is the least Perron root that can be considered because the trace becomes negative if the spectral radius is diminished.
\end{example}

\textbf{References}.

\end{document}